\title{Signatures of TQFTs and trace fields of two-bridge knots}
\author{Julien Marché}
\date{}
\newcommand{\Z}{\mathbb{Z}}
\newcommand{\Q}{\mathbb{Q}}
\newcommand{\C}{\mathbb{C}}
\newcommand{\R}{\mathbb{R}}
\newcommand{\SL}{\mathrm{SL}}
\renewcommand{\epsilon}{\varepsilon}
\DeclareMathOperator{\sg}{Sign}
\DeclareMathOperator{\tr}{Tr}
\DeclareMathOperator{\Res}{Res}
\DeclareMathOperator{\Sign}{Sign}
\DeclareMathOperator{\End}{End}
\DeclareMathOperator{\re}{Re}
\DeclareMathOperator{\Mod}{Mod}
\newcommand{\PU}{{\rm{PU}}}
\newtheorem{theorem}{Theorem}
\newtheorem{lemme}{Lemma}
\newtheorem{proposition}{Proposition}
\theoremstyle{remark}
\newtheorem{remark}{Remark}
\begin{document}
\maketitle
\begin{abstract}
Let $0<s<r$ be coprime odd integers. We show that the Frobenius algebras governing the signatures of SO$_3$ TQFTs at the root $q=\exp(i\pi s /r)$ contain (and are often equal to) the trace field of the two-bridge knot of parameters $(r,s)$. This gives an intriguing relationship between these two a priori unrelated objects of low-dimensional topology.
\end{abstract}

\section{Introduction}

Given $q\in \C$, a root of unity of order $2r$ with $r$ odd, we consider the family of SU$_2$/SO$_3$ quantum representations of mapping class groups of surfaces indexed by $\pm q$, as constructed for instance in \cite{BHMV}. Take $\Lambda=\{0,1,\ldots,r-2\}$ and $\Lambda_+=\Lambda\cap2\Z$. These representations take the following form for any closed compact oriented surface $S_{g,n}$ of genus $g$ with $n$ marked points:
$$ \rho_q^\lambda:\Mod(S_{g,n})\to \PU(W_q(S_{g,n},\lambda)).$$
Here $\Mod(S_{g,n})$ denote the mapping class group of $S_{g,n}$ fixing the marked points and $\lambda\in \Lambda^n$ ($\Lambda_+^n$ in the SO$_3$-case) is a coloring of the marked points. The Hermitian vector space $W_q(S,\lambda)$ is a modular functor: this means that it enjoys many compatibility properties with respect to usual operations on surfaces, as cutting along simple curves, see for instance \cite[Section 4]{DM}.

As a function of $q$, the dimension of $W_q(S_{g,n},\lambda)$ only depends on $r$: it is given by the celebrated Verlinde formula. On the contrary, its signature is a subtle function of $q$ not much studied until the recent work of B. Deroin and the author, see \cite{DM}. One of its main results is that cohomological invariants of the above representations, amongst which the signature appears at $0$-th order, form a semi-simple cohomological field theory. 

In this article, we consider only signatures: in particular, the present article is completely elementary and does not require any expertise in TQFT. The result of \cite{DM} states that signatures are part of a 1+1 semi-simple TQFT, or equivalently are governed by a Frobenius algebra, denoted by $V_q$ ($V_q^+$ for the SO$_3$ case). 

Let us give the precise statement: take $V_q$ (resp. $V_q^+$) to be the formal $\Q$-vector space with basis $\Lambda$ (resp. $\Lambda_+$). We denote by $e_\lambda$ the corresponding basis and define $\epsilon(e_\lambda)=1$ if $\lambda=0$ and $\epsilon(e_\lambda)=0$ otherwise. We will put in Section \ref{FrobSU2} a structure of semi-simple commutative algebra on $V_q$ such that the bilinear form $\eta(x,y)=\epsilon(xy)$ is non-degenerate on $V_q$ and the following holds:
$$\sg(W_q(S_{g,n},\lambda))=\epsilon(\Omega^g e_{\lambda_1}\cdots e_{\lambda_n}).$$ 

Here, we have written $\Omega=\sum_{i} x_iy_i\in V_q$ where $\eta^{-1}=\sum x_i\otimes y_i\in V_q\otimes V_q$. The same statement holds for SO$_3$: just replace $V_q$ with the subalgebra $V_q^+$ generated by $e_\lambda$ for $\lambda\in \Lambda_+$ and $\Omega$ with $\Omega^+=\frac{1}{2}\Omega$. 

Although explicit formulas were provided in \cite{DM}, the structure of the Frobenius algebras $V_q$ and $V_q^+$ remained mysterious. Indeed, the main applications of our work concentrated on the case $r=5$ where we had $V_q^+=\Q[x]/(x^2-x-1)$ for $q=\exp(i\pi/5)$ and $V_q^+=\Q[x]/(x^2+x+1)$ for $q=\exp(3i\pi/5)$ (here we have put $e_0=1$ and $e_2=x$). 

The main theorem of this article identifies $V_q^+$ with a completely different algebra, related to the geometry of two-bridge knots. I confess that I don't have a conceptual explanation of this result which came as a surprise from numerical experiments.  

Let $0<s<r$ be coprime odd integers and consider the two-bridge knot $K(r,s)$ with parameter $(r,s)$: it is characterized by the fact that its ramified double cover is the lens space $L(r,s)$, see \cite[Chapter 9]{Murasugi}. 
We set 
$$P(r,s)=\{\rho:\pi_1(S^3\setminus K(r,s))\to \SL_2(\C), \rho(m)\text{ is parabolic}\}/\SL_2(\C)$$
Here $m$ stands for any meridian of $K(r,s)$ and the quotient is by conjugation. This set is a $0$-dimensional algebraic variety defined over $\Q$. Hence $\Q[P(r,s)]$, the algebra of functions on $P(r,s)$ is a commutative $\Q$-algebra. We call it the Riley algebra as it is isomorphic to $\Q[x]/R(x)$ where $R$ is the Riley polynomial of $K(r,s)$, see Section \ref{Riley}.

\begin{theorem}\label{main}
Let $r,s$ be coprime odd integers satisfying $0<s<r$ and set $q=\exp(\frac{i\pi s}{r})$. There is an isomorphism of algebras $$\Q[P(r,s)]\simeq V_q^+.$$
\end{theorem}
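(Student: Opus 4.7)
My plan is to exhibit each of the two algebras as a quotient $\Q[y]/P(y)$ for a single polynomial $P$ of degree $d := (r-1)/2$, and then to verify that the two resulting polynomials coincide. The dimension count is promising: the Riley polynomial $R$ of $K(r,s)$ has degree $d$, so $\Q[P(r,s)] \simeq \Q[y]/R(y)$ is already in this form; and $V_q^+$ has a $\Q$-basis $\{e_\lambda : \lambda \in \Lambda_+\}$ of cardinality $d$. The crucial qualitative input is that $V_q^+$ is \emph{monogenic}, meaning that it is generated as a $\Q$-algebra by a single element. The natural candidate is $e_2$, motivated by the $r=5$ case described in the introduction (where $e_2$ indeed generates), and by the representation-theoretic intuition that the spin-one object is a tensor generator.

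Granting that $e_2$ generates $V_q^+$, the theorem reduces to identifying the minimal polynomial of $e_2$ with (a suitable normalization of) the Riley polynomial. The minimal polynomial of $e_2$ can be computed from the structure constants of $V_q^+$, which in turn come from the formulas in \cite{DM} for signatures of pairs of pants via $\epsilon(e_{\lambda_1}e_{\lambda_2}e_{\lambda_3}) = \sg(W_q(S_{0,3},(\lambda_1,\lambda_2,\lambda_3)))$. From these one reads off the matrix $M$ of multiplication by $e_2$ on $V_q^+$ in the basis $\{e_\lambda\}$, and the characteristic polynomial of $M$ (which has distinct roots by semi-simplicity) is the minimal polynomial sought.

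For the Riley side, the polynomial has a classical recursive construction from the two-bridge presentation $\pi_1(S^3 \setminus K(r,s)) = \langle a, b \mid wa = bw\rangle$, where $w = w_{r,s}(a,b)$ is the word controlled by the sequence $\epsilon_i := (-1)^{\lfloor is/r \rfloor}$ for $1 \leq i \leq r-1$. Specializing $a$ and $b$ to the standard parabolic matrices $\begin{pmatrix} 1 & 1 \\ 0 & 1 \end{pmatrix}$ and $\begin{pmatrix} 1 & 0 \\ -y & 1 \end{pmatrix}$ and extracting a single entry of the relation $wa - bw$ yields $R(y)$ up to normalization.

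The central difficulty, and the main obstacle of the proof, is matching these two polynomials explicitly. Both admit recursive descriptions tied to the pair $(r,s)$: the Riley polynomial via the continued fraction expansion of $s/r$, and the structure of $V_q^+$ via the level-$r$ fusion rules and signature recursions. The most natural attack is a parallel induction, aligning the two recursions step by step, likely on the length of the continued fraction. A more conceptual alternative I would pursue in parallel is to produce a direct geometric correspondence between the characters of $V_q^+$ (i.e.\ the points of $\mathrm{Spec}(V_q^+ \otimes \overline{\Q})$) and the parabolic $\SL_2(\C)$-representations of $\pi_1(S^3 \setminus K(r,s))$, presumably via evaluation at $q$ of some natural TQFT invariant of $K(r,s)$ whose roots detect these representations. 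Either route reduces to the same essential alignment step, which I expect to be the crux of the proof.
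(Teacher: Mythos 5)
Your overall strategy --- realize both algebras as $\Q[y]/(P(y))$ for a single polynomial of degree $\frac{r-1}{2}$ and then match the two polynomials --- is exactly the paper's, and your preliminary claims are correct: $V_q^+$ is indeed monogenic (though the paper's natural generator is $e_1^2=e_2-\epsilon_2$ rather than $e_2$ itself, an affine shift you would need to track), and the Riley algebra is $\Q[y]/(R(y))$ (this quietly uses that $R$ has simple roots, which the paper proves by showing the discriminant is odd; your write-up does not address it). The genuine gap is that the proposal stops exactly where the proof has to start: you name the identification of the two polynomials as ``the crux'' and offer only the hope that a parallel induction, ``likely on the length of the continued fraction'' of $s/r$, will align the two recursions. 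That is a plan, not an argument, and the specific route you sketch --- comparing the multiplication matrix of $e_2$ on $V_q^+$ directly against a recursion for $R$ --- is genuinely awkward: in the basis $e_0,e_2,\ldots,e_{r-3}$ the matrix of $e_2$ is tridiagonal with nonzero diagonal entries of the form $\epsilon_2-\epsilon_{2n+1}(\epsilon_{2n}+\epsilon_{2n+2})$, and matching these against the Riley recursion forces a case analysis on the sign patterns that you have not begun.

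The missing idea is to pass through the full algebra $V_q$, odd part included. There, multiplication by $e_1$ obeys the clean two-term recursion $e_1e_n=-\epsilon_n\epsilon_{n+1}e_{n-1}+e_{n+1}$, so its characteristic polynomial is the continuant $K_{r-1}(\epsilon_1X,\ldots,\epsilon_{r-1}X)$ up to the global sign $\epsilon_1\cdots\epsilon_{r-1}=1$. The continuant is the upper-left entry of the matrix product $\prod_i\left(\begin{smallmatrix}\epsilon_iX&1\\1&0\end{smallmatrix}\right)$; packing these factors two by two and conjugating by $\mathrm{diag}(1,X)$ turns each pair into $\left(\begin{smallmatrix}1&\epsilon_{2i-1}\\0&1\end{smallmatrix}\right)\left(\begin{smallmatrix}1&0\\ \epsilon_{2i}X^2&1\end{smallmatrix}\right)$, i.e.\ literally into the parabolic matrices $\rho(u)^{\epsilon_{2i-1}}\rho(v)^{\epsilon_{2i}}$ occurring in the word $w$ of the two-bridge presentation, with $X^2$ playing the role of the Riley variable. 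Comparing upper-left entries gives $P_{r-1}(X)=R(X^2)$ with no induction on continued fractions at all; the even/odd block decomposition of the $e_1$-matrix then gives $P_{r-1}(X)=\det\bigl(X^2-e_1^2|_{V_q^+}\bigr)$, whence $V_q^+\simeq\Q[y]/(R(y))\simeq\Q[P(r,s)]$. Without this (or an equivalent) mechanism, the central step of your proposal remains open.
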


When $s>1$, the knot $K(r,s)$ is hyperbolic and the monodromy of the hyperbolic structure on $S^3\setminus K(r,s)$ appears as a complex point of $P(r,s)$. This means that the trace field of $K(r,s)$ is a factor of $\Q[P(r,1)]$. On the contrary, the knot $K(r,1)$ is a torus knot, hence is not hyperbolic. However it has a parabolic representation with traces in $\Q(\cos(\frac{2\pi}{r}))$ which is then a factor of $\Q[P(r,1)]$. The Hermitian TQFT corresponding to $q=\exp(i\pi/r)$ is unitary and its signature is equal to its dimension. The Frobenius algebra in this case is the usual Verlinde algebra. 

In \cite{DM}, we observed without proof that amongst all roots of unity of order $r$, there is an involution corresponding to pairs of Frobenius algebras which are isomorphic as algebras. This phenomenon has now a nice explanation: it corresponds to the fact that $K(r,s)$ and $K(r,s^*)$ are isotopic as knots where $s^*$ is the inverse of $s$ modulo $r$. 

The reader may notice that the previous theorem only states an isomorphism of algebras. Indeed, there is no natural structure of Frobenius algebra on $\Q[P(r,s)]$. In the next theorem, we identify this structure in terms of Riley polynomials: for simplicity, we only state the case of $V_q$ in the introduction. 

\begin{theorem}\label{Frob}
In the settings of Theorem \ref{main}, the Frobenius algebra on $V_q$ has the following explicit form. 
Let $\epsilon_n=(-1)^{\lfloor ns/r\rfloor}$ for $n=1,\ldots,r-1$ and define the coprime and monic polynomials $P_{r-2},P_{r-1}$ of respective degrees $r-2,r-1$ such that the following holds:
$$\frac{P_{r-1}}{P_{r-2}}=X+
\cfrac{\epsilon_1\epsilon_2}
{X+\cfrac{\epsilon_2\epsilon_3}{X+\cfrac{\cdots}{\cdots+\cfrac{\epsilon_{r-2}\epsilon_{r-1}}{X}}}
}$$
Then $V_q=\Q[X]/(P_{r-1})$ and for any $f\in V_q$ one has: $$\epsilon(f)=\sum_{x}\Res_x\left(\frac{fP_{r-2}}{P_{r-1}}\right).$$
Finally, $P_{r-1}$ is related to the Riley polynomial by the formula $P_{r-1}(X)=R(X^2)$.
\end{theorem}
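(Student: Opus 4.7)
The plan is to realize $V_q$ as $\Q[X]/(P_{r-1}(X))$ by setting $X = e_1$ and computing the three-term recursion satisfied by the basis adapted to multiplication by $X$, from which both the continued fraction and the residue formula follow. The identification with the Riley polynomial is then a separate combinatorial step.

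Two ingredients come from unwinding the definition of $V_q$ in Section \ref{FrobSU2}. First, in the basis $(e_\lambda)_{\lambda\in\Lambda}$ the form $\eta$ is diagonal, with $d_\lambda := \eta(e_\lambda,e_\lambda) = \sg(W_q(S^2,(\lambda,\lambda)))$ equal, up to a uniform framing sign, to the sign of the quantum dimension $[\lambda+1]_q = \sin((\lambda+1)\pi s/r)/\sin(\pi s/r)$, which is $\epsilon_{\lambda+1}$. Second, the SU$_2$-fusion rule makes multiplication by $X$ tridiagonal with zero diagonal. Rescaling $e_\lambda$ into a monic polynomial $P_\lambda(X) \in \Q[X]$ of degree $\lambda$ puts multiplication in the Jacobi form
\begin{equation*}
P_{\lambda+1}(X) = XP_\lambda(X) - \alpha_\lambda P_{\lambda-1}(X), \qquad P_0 = 1,\ P_1 = X,
\end{equation*}
and Frobenius reciprocity $\eta(Xe_\lambda,e_{\lambda+1}) = \eta(e_\lambda,Xe_{\lambda+1})$ ties $\alpha_\lambda$ to the ratio $d_{\lambda+1}/d_\lambda$. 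Combining this with the previous sign computation, and using the palindromic symmetry $\epsilon_n = \epsilon_{r-n}$ that follows from $s$ being odd, the coefficients $\alpha_\lambda$ turn out to be exactly the products $\epsilon_n\epsilon_{n+1}$ appearing in the theorem. Because $e_{r-1} = 0$ in $V_q$, the polynomial $P_{r-1}$ vanishes on $X$, giving $V_q = \Q[X]/(P_{r-1})$; unfolding the recursion produces the stated continued-fraction expansion of $P_{r-1}/P_{r-2}$.

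The residue formula then follows from standard Jacobi-matrix spectral theory: the resolvent matrix element $\eta(1,(z-X)^{-1}1)$ at the cyclic vector $1 = P_0$ equals $P_{r-2}(z)/P_{r-1}(z)$, so partial-fraction decomposition at the roots of $P_{r-1}$ gives $\epsilon(f) = \sum_x \Res_x(fP_{r-2}/P_{r-1})$. The involution $e_\lambda\mapsto(-1)^\lambda e_\lambda$ conjugates $X$ to $-X$, so $P_{r-1}$, being of even degree $r-1$, is an even polynomial and writes uniquely as $R(X^2)$ for a monic polynomial $R$ of degree $(r-1)/2$.

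Finally, I would identify $R$ with the Riley polynomial of $K(r,s)$ using the description of the latter from Section \ref{Riley} as a matrix product over the two-bridge normal form of $K(r,s)$, whose factors are indexed by the same sign sequence $\epsilon_n = (-1)^{\lfloor ns/r\rfloor}$. The substitution $Y = X^2$ converts that matrix product into a three-term recursion in $Y$, and a direct comparison matches it with the recursion for $P_{r-1}$. This last identification is the main obstacle: while the first three steps are essentially linear algebra and spectral theory, reconciling the internal Jacobi recursion of $V_q$ with the external matrix-product presentation of the Riley polynomial is where the hidden arithmetic of the two-bridge knot enters, and where most of the combinatorial work lies.
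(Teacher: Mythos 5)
Your architecture is essentially the paper's: set $x=e_1$, observe that multiplication by $e_1$ is tridiagonal with zero diagonal, introduce the monic polynomials by the resulting three-term recursion, identify $P_{r-1}$ with the characteristic polynomial of $M_{e_1}$ (whence $V_q\simeq\Q[X]/(P_{r-1})$ and the continued fraction), and get evenness of $P_{r-1}$ from the parity involution. The one genuinely different sub-step is the residue formula: the paper proves the Christoffel--Darboux identity to get the closed form $\Omega=-P_{r-1}'(x)P_{r-2}(x)$ and then applies $\epsilon(f)=\tr_{V_q}(\Omega^{-1}f)$, whereas you use the resolvent identity $\epsilon((z-X)^{-1})=P_{r-2}(z)/P_{r-1}(z)$ via Cramer's rule. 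Your route is shorter and does give all moments $\epsilon(X^k)$ by expanding at infinity (so it does not even need simple roots), but note that Cramer produces the characteristic polynomial of $M$ with the first row and column deleted, whose recursion starts at $\epsilon_2\epsilon_3$; identifying that minor with $P_{r-2}$ requires the palindromic symmetry $\epsilon_n=\epsilon_{r-n}$. Also be careful invoking ``standard Jacobi-matrix spectral theory'': the off-diagonal products $-\epsilon_n\epsilon_{n+1}$ are not positive in general, so the matrix is not symmetrizable over $\R$ and simplicity of the spectrum is not automatic -- the paper proves it by a separate discriminant-mod-$2$ argument (Lemma \ref{semi-simple}). The paper's longer route has the side benefit of computing $\Omega$ explicitly, which it needs elsewhere.

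Two concrete problems remain. First, a sign: in the paper $\eta(e_i,e_i)=\sg((-1)^i[i+1])=(-1)^i\epsilon_{i+1}$, not $\epsilon_{i+1}$ up to a \emph{uniform} sign. The alternating factor $(-1)^i$ is precisely what makes your reciprocity computation yield $M_{n-1,n}M_{n,n-1}=d_{n-1}d_n=-\epsilon_n\epsilon_{n+1}$ and hence $P_{n+1}=XP_n+\epsilon_n\epsilon_{n+1}P_{n-1}$, matching the stated continued fraction; with a uniform sign you would get $-\epsilon_n\epsilon_{n+1}$ in every numerator, and the resulting polynomial would be $\pm P_{r-1}(iX)=\pm R(-X^2)$, not $R(X^2)$. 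Second, the step you defer -- identifying $R$ with the Riley polynomial -- is where the actual work is, and ``the substitution $Y=X^2$ converts the matrix product into a three-term recursion in $Y$'' is not yet an argument. The paper's mechanism: $P_{r-1}(X)$ equals the continuant $K_{r-1}(\epsilon_1X,\dots,\epsilon_{r-1}X)$ (the prefactor $\epsilon_1\cdots\epsilon_{r-1}$ is $1$ by the palindrome symmetry), the continuant is the product of the matrices $\begin{pmatrix}\epsilon_nx&1\\1&0\end{pmatrix}$, and pairing consecutive factors and conjugating by $\mathrm{diag}(1,x)$ turns each pair into $\begin{pmatrix}1&\epsilon_{2i-1}\\0&1\end{pmatrix}\begin{pmatrix}1&0\\\epsilon_{2i}x^2&1\end{pmatrix}$, i.e.\ exactly the Wirtinger word $\rho(w)$ at the Riley variable $x^2$, whose upper-left entry is $R(x^2)$ by definition. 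Until you supply an argument of this kind, the last clause of the theorem is unproven.
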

The last statement has already been obtained in \cite{JoKim}. This theorem shows that the Frobenius algebra structure involves the ``continued fractions structure'' of Riley polynomials. Its proof makes use of classical techniques of orthogonal polynomials. Moreover, the natural appearance of residues is a good sign for finding a spectral curve governing the cohomological field theories described above, which was the initial motivation of this work. 

In the SO$_3$ case, one can write $V_q^+=\Q[X]/(\chi)$ using the factorization $R(-X^2)=\chi(X)\chi(-X)$ which has also been obtained in \cite{JoKim}, see Section \ref{SO3}.
In the present article, this factorization follows from the properties of the element $e_{r-3}\in V_q^+$. This element has been used in \cite{BHMV} in order decompose the TQFT vector spaces associated to SU$_2$. It was also very useful in \cite{DM} in order to compute the first term in the $R$-matrix associated to the CohFT. A lot of questions remain open, for which we indicate some sparse results.

\subsubsection*{Simplicity}

For many values of $(r,s)$, the algebra $\Q[P(r,s)]\simeq V_q^+$ is simple, i.e. is a number field, shown in yellow in Figure \ref{table} (non-simple cases are shown in orange). The most appealing question is whether this algebra is always simple when $r$ is prime, for any value of $s$. This is well-known if the cyclotomic polynomial of order $r$ is irreducible modulo $2$ (because the algebra modulo $2$ is independent of $s$), the remaining cases are still open. It also seems that for a fixed even integer $k$, the algebra $\Q[P(r,r-k)]$ is simple provided that $r$ is big enough. We prove it in the case $k=2$ by analysing the roots of the polynomial $\chi$ defined above. 

\begin{proposition}\label{propsimple}
For any odd $r$, the algebras $\Q[P(r,r-2)]\simeq V_{-\exp(2i\pi/r)}^+$ are number fields. They have no real embeddings if $r\equiv 1[4]$ and one real embedding if $r\equiv -1[4]$.
\end{proposition}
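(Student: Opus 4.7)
My plan is to follow the hint in the statement and analyse the roots of $\chi$ directly, after first working out $P_{r-1}$ in closed form when $s = r - 2$. A direct computation of $\epsilon_n = (-1)^{\lfloor n(r-2)/r\rfloor}$ shows that $\epsilon_n\epsilon_{n+1} = -1$ for every $n\in\{1,\dots,r-2\}$ \emph{except} $n = (r-1)/2$, where it equals $+1$, so the continued fraction of Theorem~\ref{Frob} is the standard three-term Chebyshev recursion with a single sign flip at the midpoint. A short induction gives
\[
P_{r-1}(X) = U_{r-1}(X) + 2\sum_{k=0}^{(r-3)/2} U_{2k}(X),
\]
where $U_n$ is defined by $U_n(2\cos\theta) = \sin((n+1)\theta)/\sin\theta$. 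Summing the sines of odd multiples of $\theta$ that appear on the right-hand side collapses this to the compact identity
\[
\sin^2\theta\, P_{r-1}(2\cos\theta) = 1 - \cos(r\theta)\cos\theta. \qquad (\ast)
\]

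With $(\ast)$ in hand the real-root count for $\chi$ is immediate. Since $\chi(X)\chi(-X) = P_{r-1}(iX)$, real zeros of this product are purely imaginary zeros of $P_{r-1}$, so I parameterise them by $iX = 2\cos\theta$ with $\theta = \pi/2 + it$ and $t \in \R$; then $X = -2\sinh t$, $\sin\theta = \cosh t$, and $\cos(r\theta)\cos\theta = \mp\sinh(t)\sinh(rt)$ according as $r \equiv 1$ or $r \equiv -1 \pmod 4$. Substituting into $(\ast)$ yields
\[
\cosh^2(t)\,\chi(-2\sinh t)\,\chi(2\sinh t) = 1 \pm \sinh(t)\sinh(rt),
\]
which is bounded below by $1$ for every real $t$ when $r \equiv 1 \pmod 4$ (so $\chi$ has no real root and $V_q^+$ has no real embedding), and is an even smooth function of $t$ equal to $1$ at $t=0$ and tending to $-\infty$ as $|t|\to\infty$ when $r\equiv -1\pmod 4$ (so it vanishes at exactly one pair $\pm t_0$, producing two real zeros of $\chi\cdot\chi(-\cdot)$ and hence a single real root of $\chi$). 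This gives the claimed count of real embeddings.

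For simplicity of $V_q^+$ I would reduce to irreducibility of the Riley polynomial $R$: if $\alpha$ is a root of $\chi$ then $-\alpha^2$ is a root of $R$, so $\Q(\alpha) \supseteq \Q(\alpha^2)$ has $[\Q(\alpha^2):\Q] = \deg R$ when $R$ is irreducible, forcing $[\Q(\alpha):\Q] = \deg R = (r-1)/2 = \deg\chi$ and hence $\chi$ irreducible. Combining $(\ast)$ with $R(Y^2) = P_{r-1}(Y)$ and the half-angle identity $T_{n+1}(2v^2 - 1) + T_n(2v^2 - 1) = 2vT_r(v)$ (for $n = (r-1)/2$, with $T_k$ the Chebyshev polynomials of the first kind) produces the clean factorisation $(v^2 - 1)R(4v^2) = vT_r(v) - 1$, so irreducibility of $R$ over $\Q$ amounts to analysing how the palindromic polynomial $vT_r(v) - 1$ factors over $\Q$ after removing its obvious factor $v^2 - 1$. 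This last irreducibility claim is the main obstacle in the plan: the mod-$p$ factorisation of $vT_r(v) - 1$ depends delicately on $r$, so no single prime will produce a uniform-in-$r$ proof, and the cleanest route is most likely an intrinsic Galois-orbit argument acting transitively on the non-trivial roots of $vT_r(v) = 1$ that really brings in the special feature $s = r - 2$.
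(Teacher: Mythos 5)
Your treatment of the real-embedding count is correct and is a genuinely different argument from the paper's. You derive the closed form $\sin^2\theta\,P_{r-1}(2\cos\theta)=1-\cos(r\theta)\cos\theta$ (which I checked: the single sign flip in the recursion at $n=(r-1)/2$ gives $P_{r-1}=U_{r-1}+2U_{(r-3)/2}^2$, and the product formula collapses this to your sum and then to the stated identity), and the hyperbolic substitution $\theta=\pi/2+it$ turns the purely imaginary roots of $P_{r-1}$ into the equation $\sinh(t)\sinh(rt)=\pm1$, whose strict monotonicity on $t>0$ pins down exactly $0$ or $2$ real zeros of $\chi(X)\chi(-X)$ and hence $0$ or $1$ real roots of $\chi$ (simplicity of the roots, needed to rule out $\chi$ containing both $\pm t_0$, follows from Lemma \ref{semi-simple}). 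The paper instead works with the matrix $W$ of $e_{r-3}$ directly: $W+W^T=2E$ with $E$ a rank-one nonnegative matrix, so every eigenvalue has positive real part, and the count then comes from Descartes' rule applied to the explicitly computed, sign-alternating coefficients of $\chi$. Your route is more analytic and self-contained on this point; the paper's route has the advantage that the spectral information it extracts ($\re\lambda>0$ and $\tr W=1$) is exactly what powers the other half of the proposition.

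That other half is where your proposal has a genuine gap, and you acknowledge it yourself: you reduce simplicity of $V_q^+$ to irreducibility of $R$, equivalently to the factorisation of $vT_r(v)-1$ after removing $v^2-1$, and then stop, noting that no uniform mod-$p$ argument is available. This is not a removable technicality — the "number field" claim is half the statement, and the reduction you propose is, if anything, harder than the original problem (irreducibility of $R$ for general $(r,s)$ is precisely the open question raised in the introduction). The paper closes this gap with a short global argument that your setup does not see: if $\chi=PQ$ nontrivially with $P,Q$ monic in $\Z[t]$, then $\sum_{P(\lambda)=0}\re\lambda$ and $\sum_{Q(\lambda)=0}\re\lambda$ are each a rational integer (the non-real roots pair up) and each strictly positive (every eigenvalue of $W$ has $\re\lambda>0$, because $X^*WX+X^*W^TX=2|X_{n+1}|^2$ and $X_{n+1}=0$ forces $X=0$), yet they sum to $\tr W=1$ — a contradiction. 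Note that your identity $(\ast)$ only controls $\chi(X)\chi(-X)$, which is symmetric under $X\mapsto-X$ and therefore cannot distinguish which of $\pm\lambda$ is a root of $\chi$; so the positivity of real parts, which is the key input for irreducibility, really does require going back to the matrix $W$ itself rather than to $P_{r-1}$.
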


\subsubsection*{Signatures}

The algebra $V_q^+$, beyond governing signatures of TQFTs, has its own signature denoted by $r_1(V_q^+)$. It is the signature of the bilinear form $(x,y)\mapsto \tr_{V_q^+}(xy)$ and is equal to the number of real embeddings of $V_q^+$ if it is a number field. It would be interesting to relate this signature to the signature of the form $\eta$ on $V_q$: this one is given by the formula $$\sg(\eta)=\sum_{n=1}^{r-1}\epsilon_n=2\sg(\eta^+)$$
which is also equal to the standard signature of the knot $K(r,s)$, see Section \ref{signature}.  
We will prove in Section \ref{signature} the inequality 
\begin{equation}\label{ineg}
|\sg(\eta^+)|\le r_1(V_q^+).
\end{equation}

I thought for a while that this inequality was an equality until counterexamples were found by P.V. Koseleff for $r\ge 39$. It would be nice to find an exact formula, maybe using Dedekind sums. Observe also that when $s>1$, the trace field of $K(r,s)$ has at least one complex embedding due to the hyperbolic structure. Hence $r_1(V_q^+)<\dim V_q^+=\frac{r-1}{2}$ in all those cases. 

\subsubsection*{Acknowledgments}
It is my pleasure to thank Pierre Charollois, Bertrand Deroin, Pascal Dingoyan, Pierre Godfard, Pierre-Vincent Koseleff and Gregor Masbaum for useful discussions around this work. 

\begin{figure}[htbp]\label{table}
\begin{center}
        \begin{tabular}{|c|c|c|c|c|c|c|c|c|c|c|c|}
        \hline
         $r\backslash s$ & $1$ & $3$ & $5$ & $7$ & $9$ & $11$ & $13$ & $15$ & $17$ & $19$ & $21$\\
        \hline
        $3$&\cellcolor{yellow}1&&&&&&&&&&\\
        \hline
        $5$&\cellcolor{yellow}2&\cellcolor{yellow}0&&&&&&&&&\\
        \hline
        $7$&\cellcolor{yellow}3&\cellcolor{yellow}1&\cellcolor{yellow}1&&&&&&&&\\
        \hline
        $9$&\cellcolor{orange}4&&\cellcolor{yellow}0&\cellcolor{yellow}0&&&&&&&\\
        \hline
        $11$&\cellcolor{yellow}5&\cellcolor{yellow}1&\cellcolor{yellow}1&\cellcolor{yellow}-1&\cellcolor{yellow}1&&&&&&\\
        \hline
        $13$&\cellcolor{yellow}6&\cellcolor{yellow}2&\cellcolor{yellow}0&\cellcolor{yellow}0&\cellcolor{yellow}2&\cellcolor{yellow}0&&&&&\\
        \hline
        $15$&\cellcolor{orange}7& & &\cellcolor{yellow}1& &\cellcolor{orange}1&\cellcolor{yellow} $1$&&&&\\
        \hline
        $17$&\cellcolor{yellow}8&\cellcolor{yellow}2&\cellcolor{yellow}2&\cellcolor{yellow}2&\cellcolor{yellow}0&\cellcolor{yellow}-2&\cellcolor{yellow}0&\cellcolor{yellow}0&&&\\
        \hline
        $19$&\cellcolor{yellow}9&\cellcolor{yellow}3&\cellcolor{yellow}1&\cellcolor{yellow}1&\cellcolor{yellow}1&\cellcolor{yellow}1&\cellcolor{yellow}3&\cellcolor{yellow}1&\cellcolor{yellow}1&&\\
        \hline
        $21$&\cellcolor{orange}10& &\cellcolor{yellow}2& &&\cellcolor{yellow}0&\cellcolor{orange}0&&\cellcolor{yellow}2&\cellcolor{yellow}0&\\
        \hline
        $23$&\cellcolor{yellow}11&\cellcolor{yellow}3&\cellcolor{yellow}1&\cellcolor{yellow}1&\cellcolor{yellow}-1&\cellcolor{yellow}1&\cellcolor{yellow}-1&\cellcolor{yellow}-3&\cellcolor{yellow}1&\cellcolor{yellow}1&\cellcolor{yellow}1\\
        \hline
        \end{tabular}
        \caption{Table of $\sg(\eta^+)$: yellow for simple cases, orange for non-simple ones. In all shown cases, the inequality \eqref{ineg} is an equality.}
\end{center}
\end{figure}
\section{Generalities on Frobenius algebras}\label{generalites}

In this article, a Frobenius algebra is a finite dimensional commutative $\Q$-algebra $V$ endowed with a linear form $\epsilon:V\to \Q$ such that the bilinear form $\eta(x,y)=\epsilon(xy)$ is non-degenerate. 

There is a natural element $\Omega\in V$ constructed as follows: write $\eta^{-1}=\sum_{i}x_i\otimes y_i \in V\otimes V$. Then $\Omega=\sum_{i}x_i y_i$. In the standard equivalence between Frobenius algebras and $1+1$ TQFTs, $V$ is the image of the circle, $\epsilon$ corresponds to capping off with a disc and $\Omega$ corresponds to the punctured torus. 

The Frobenius algebra is said to be semi-simple if $V$ is semi-simple as an algebra, that is, isomorphic to a product of number fields. Here is a useful equivalent formulation: let $M_x\in \End(V)$ be the operation of multiplication by $x\in V$ and set $\tr_V(x)=\tr M_x$. The algebra $V$ is semi-simple if and only if the bilinear form $(x,y)\mapsto \tr_{V}(xy)$ is non-degenerate. In this case there is a unique invertible element $\alpha\in V$ such that $\epsilon(x)=\tr_V(\alpha x)$ for all $x\in V$ and one has moreover $\alpha^{-1}=\Omega$, see \cite[Section 5.1.1]{DM}.

The Frobenius algebras of this article arise in the following form. Let $P\in \Q[X]$ be a polynomial with simple roots and $h\in \Q(X)$ be a rational function without poles at roots of $P$. Then we set $V=\Q[X]/(P(X))$ and define for $f\in \Q[X]$:
$$\epsilon(f)=\sum_{z,P(z)=0}\Res_z\frac{fh}{P}.$$

This can be written equivalently 
$$\epsilon(f)=\sum_{z,P(z)=0} \frac{f(z)h(z)}{P'(z)}=\tr_{V/\Q}(\frac{fh}{P'})$$
so that one has $\alpha=\frac{h}{P'}$ and $\Omega=\frac{P'}{h}$.

In this case the invariant $\langle S_g\rangle$ associated to a genus $g$ surface is given by 

$$\langle S_g\rangle=\epsilon(\Omega^g)=\tr_{V}\Omega^{g-1}=\frac{1}{2i\pi}\int_C\frac{P'(z)^gdz}{P(z)h(z)^{g-1}}$$
where $C$ is a contour with index $1$ around the zeroes of $P$ and $0$ around the poles of $h$.

\begin{remark}
Notice that $h$ is part of the necessary data to define a Frobenius algebra. For instance, if one sets $h=1$, we get $\Omega=P'$ and $N(\Omega)=\operatorname{Disc}(P)$.
\end{remark}




\section{Signed Verlinde algebras for SU$_2$}\label{FrobSU2}

Let $q$ be a root of unity of order $2r$ with $r$ odd: in the sequel we will choose $q=\exp(i\pi s/r)$ with $0<s<r$ where $s$ is odd and prime to $r$. We use below the notation of \cite{BHMV}. 

We set $[n]=\frac{q^n-q^{-n}}{q-q^{-1}}$ and $[n]!=[n][n-1]\cdots [1]$. 
We will denote by $\epsilon_n\in \{-1,0,1\}$ the sign of $[n]$. We check that $\epsilon_0=\epsilon_r=0$, $\epsilon_1=1$ and 
$$\epsilon_n=\sg\left(\frac{\sin(\pi n s/r)}{\sin(\pi s/r)}\right)=(-1)^{\lfloor ns/r\rfloor}.$$
Moreover, as $q^r=-1$, we get $[r-n]=[n]$ and $\epsilon_n=\epsilon_{r-n}$.

Let $i,j,k\in \{0, \ldots, r-2\}$. The triple $(i,j,k)$ is said $r$-admissible if it satisfies 
\begin{equation}\label{r-adm}
i\le j+k, j\le i+k,k\le i+j, i+j+k \text{ is even and } i+j+k\le 2r-4.
\end{equation}

For such a triple, one writes $i=b+c, j=a+c, k=a+b$ and sets 
$$\langle i,j,k\rangle=(-1)^{a+b+c}\frac{[a+b+c+1]![a]![b]![c]!}{[a+b]![a+c]![b+c]!}.$$

We then define the Frobenius algebra $V_q$ as a $\Q$-vector space with basis $e_0,\ldots, e_{r-2}$ endowed with a symmetric bilinear form $\eta:V_q^2\to \Q$ for which $e_i$ is orthogonal and verifies $\eta(e_i,e_i)=\sg((-1)^i[i+1])=(-1)^i\epsilon_{i+1}$. We also endow it with a trilinear symmetric form $\omega:V_q^3\to \Q$ defined by $$\omega(e_i,e_j,e_k)=\sg \langle i,j,k\rangle\text{ if }(i,j,k)\text{ is }r\text{-admissible, }0\text{ otherwise.}$$

The product $\cdot:V_q\times V_q\to V_q$ defined by $\omega(x,y,z)=\eta(x\cdot y,z)$ endows $V_q$ with a structure of Frobenius algebra, as shown in \cite{DM}.

One checks by a direct computation the following formula where we put also $e_{-1}=e_{r-1}=0$:

$$e_1 e_n= -\epsilon_n\epsilon_{n+1}e_{n-1}+e_{n+1}.$$

It is natural to introduce polynomials which satisfy the same recursion formula, that is we set $P_0=1, P_1=X$ and 

\begin{equation}\label{rec}
XP_n=-\epsilon_n \epsilon_{n+1} P_{n-1}+P_{n+1}
\end{equation}

A direct induction shows that $P_n$ is even (resp.) odd if $n$ is even (resp. odd). These polynomials are obtained as the determinant of the upper-left minor of size $n$ of the following matrix of size $r-1$:
$$M=\begin{pmatrix} X & -\epsilon_1\epsilon_2 &  & &  \\
1& X & -\epsilon_2\epsilon_3 &  & \\
&&\ddots&&\\
&&1&X&-\epsilon_{r-2}\epsilon_{r-1}\\
&&&1&X
\end{pmatrix}$$

As $r-1$ is even, one has $P_{r-1}(X)=P_{r-1}(-X)$: this shows that $P_{r-1}$ is indeed the characteristic polynomial of the endomorphism of multiplication by $e_1$. This construction shows that the map $\Phi:\Q[X]\to V_q$ defined by $\Phi(P)=P(e_1)$ induces an algebra isomorphism
$$\Q[X]/(P_{r-1})\simeq V_q$$
which moreover satisfies $\Phi(P_n)=e_n$. For convenience, we will set $x=\Phi(X)=e_1$ so that we can write $e_n=P_n(x)$.

\begin{lemme}\label{semi-simple}
The polynomial $P_{r-1}$ has simple roots, equivalently, the algebra $V_q$ is semi-simple.
\end{lemme}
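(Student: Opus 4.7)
My plan is to show that $P_{r-1}$ is squarefree, which is equivalent to semi-simplicity of $V_q$: since $\mathbb{Q}$ is perfect, every irreducible factor of $P_{r-1}$ is automatically separable, so $V_q$ fails to be semi-simple if and only if $P_{r-1}$ has a repeated irreducible factor. The main tool will be a Christoffel--Darboux-type identity for the signed three-term recurrence \eqref{rec}.

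First, by induction on $n$ using \eqref{rec} and $\epsilon_k^2 = 1$, I would establish
\[
(-1)^n \epsilon_{n+1}\bigl[P_{n+1}(x)P_n(y) - P_n(x)P_{n+1}(y)\bigr] \;=\; (x-y)\sum_{k=0}^{n} (-1)^k \epsilon_{k+1} P_k(x) P_k(y).
\]
Setting $n = r-2$ and letting $y \to x$ at a root $\alpha$ of $P_{r-1}$ yields
\[
\Omega(\alpha) \;=\; (-1)^{r-2}\epsilon_{r-1}\, P_{r-1}'(\alpha)\, P_{r-2}(\alpha),\qquad \Omega(X) := \sum_{k=0}^{r-2}(-1)^k \epsilon_{k+1} P_k(X)^2,
\]
where the class of $\Omega$ in $V_q$ is precisely the canonical element of Section \ref{generalites}.

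Next I would verify that $\gcd(P_{r-1}, P_{r-2}) = 1$: if $\alpha$ were a common root, the recurrence together with $\epsilon_{r-2}\epsilon_{r-1} \ne 0$ forces $P_{r-3}(\alpha)=0$, and descending one obtains $P_0(\alpha) = 1 = 0$, absurd. Hence $P_{r-2}(\alpha) \ne 0$ at every root of $P_{r-1}$, and the identity above shows that $\alpha$ is a simple root of $P_{r-1}$ if and only if $\Omega(\alpha) \ne 0$. Equivalently, $V_q$ is semi-simple if and only if $\Omega$ is invertible in $V_q$.

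The main obstacle is proving this last invertibility. The cleanest route is to invoke the main result of \cite{DM}, which establishes that the signatures form a semi-simple $1{+}1$ TQFT; by the general equivalence recorded in Section \ref{generalites}, this is the same as $\Omega \in V_q^{\times}$. A fully self-contained argument would require proving directly that the signed sum of squares $\sum_k (-1)^k \epsilon_{k+1} P_k(\alpha)^2$ never vanishes, which seems to require extra structural input such as the identification $P_{r-1}(X) = R(X^2)$ of Theorem \ref{Frob}, where $R$ is the Riley polynomial of $K(r,s)$; its separability would then furnish the result.
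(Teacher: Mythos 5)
Your reduction is correct as far as it goes: the Christoffel--Darboux identity you state is exactly the one the paper proves later (in the subsection on $\Omega$), your sign bookkeeping is right (with $r$ odd and $\epsilon_{r-1}=\epsilon_1=1$ it yields $\Omega=-P_{r-1}'(x)P_{r-2}(x)$), the coprimality of $P_{r-1}$ and $P_{r-2}$ does follow by descending the recurrence, and the equivalence between semi-simplicity of $V_q$ and invertibility of $\Omega$ is the one recorded in Section~\ref{generalites}. But at that point you have only traded the statement for an equivalent one, and the proposal stops there. Appealing to the semi-simplicity result of \cite{DM} is not an acceptable way to close the gap here: the paper advertises itself as completely elementary, cites \cite{DM} only for the existence of the Frobenius algebra structure (associativity of the product defined from $\omega$ and $\eta$), and supplies its own proof of semi-simplicity precisely so that this TQFT input is not needed. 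Moreover, if you do import semi-simplicity from \cite{DM}, then $\Q[X]/(P_{r-1})\simeq V_q$ being semi-simple already forces $P_{r-1}$ squarefree, and your entire Christoffel--Darboux setup becomes superfluous. Your closing suggestion --- use $P_{r-1}(X)=R(X^2)$ and the separability of the Riley polynomial --- is closer to the mark but still not a proof: one must actually establish that $R$ is squarefree, and also that $R(0)\neq 0$, since a simple root of $R$ at the origin would produce a double root of $R(X^2)$.

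The paper's argument is different and self-contained: it reduces modulo $2$. Since every $\epsilon_n\equiv 1$, the recurrence \eqref{rec} becomes independent of $s$ modulo $2$ and produces the Chebyshev-like polynomials $U_n$ with $U_n(i(t+t^{-1}))=i^n\frac{t^{n+1}-t^{-n-1}}{t-t^{-1}}$. The roots of $U_{r-1}$ in $\overline{\mathbb{F}}_2$ are the values $i(\zeta_j+\zeta_j^{-1})$ for $\zeta_j$ ranging over the nontrivial roots of unity of order dividing $2r$, and these give exactly $(r-1)/2$ distinct nonzero values; hence $R$ has $(r-1)/2$ distinct roots modulo $2$, its discriminant is odd, and in particular nonzero over $\Q$. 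The missing step in your proposal is precisely an unconditional argument of this kind showing that $\Omega$ (equivalently the discriminant of $P_{r-1}$) does not vanish; without it the lemma is not proved.
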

\begin{proof}
To start, recall that one can write $P_{r-1}(X)=R(X^2)$. It is sufficient to show that $R$ has simple roots: we will prove it by showing that its discriminant is odd. That is, we prove that the roots of $R$ in $\overline{\mathbb{F}}_2$ are distinct. 
We observe that the recursion relation \eqref{rec} defining $P_n$ is the same modulo 2 as the one we obtain for $s=1$ where $\epsilon_1=\cdots=\epsilon_{r-1}=1$. This gives the formula:
$$XU_n=U_{n+1}-U_{n-1}.$$
This polynomial satisfies the equation $U_n(i(t+t^{-1}))=i^n\frac{t^{n+1}-t^{-n-1}}{t-t^{-1}}$. Observe that one still has $U_{r-1}(x)=R(x^2)$ modulo $2$. As $R$ has degree $\frac{r-1}{2}$, we are reduced to finding $(r-1)/2$ distinct roots for $U_{r-1}$. 

As $r$ is odd, there are $r$ distinct  $r$-th roots of unity in $\overline{\mathbb{F}}_2$. As the Frobenius map $x\mapsto x^2$ is an isomorphism, there are as many roots of order $2r$, denoted by $1=\zeta_1,\ldots,\zeta_r$. Roots of $U_{r-1}$ have the form $i(\zeta_j+\zeta_j^{-1})$ for $j=2,\ldots,r$.  The involution $\zeta_j\mapsto \zeta_j^{-1}$ only fixes $\zeta_1=1$ and one has $\zeta_j+\zeta_j^{-1}=\zeta_k+\zeta_k^{-1}$ if and only if $j=k$. This finishes the proof of the lemma.
\end{proof}
\begin{remark}
This proof is a variant of the proof that the Riley polynomial has simple roots, see \cite{Riley}.
\end{remark}

\subsection{Decomposition into even and odd parts}

Denote by $V^+_q$ the subspace generated by the $e_n$ where $n$ is even. The Frobenius algebra structure on $V_q$ induces a Frobenius algebra structure on $V^+_q$ which we call the SO$_3$ signed Verlinde algebra of parameter $q$. 

\begin{remark}{\em
In \cite{DM}, it corresponds to the Frobenius algebra associated with SO$(3)$ with the root $-q$, which has order $r$.

Indeed, its basis is given by $e_0,e_2,\ldots, e_{r-3}$ and signs $\eta(e_{2i},e_{2i})=\epsilon_{2i+1}$. The formula for $\omega$ is the same. As changing $q$ to $-q$ does not change the quantum integers, the conclusion follows.}
\end{remark}

Writing the matrix $M$ in the basis $(e_0,e_2,\ldots,e_{r-3},e_1,e_3,\ldots,e_{r-2})$ we compute:
$$P_{r-1}(X)=\det(M)=\det \begin{pmatrix} X I & A \\ B& XI\end{pmatrix}=\det(X^2-AB).$$ 

On the other hand, $AB$ is precisely the matrix of $e_1^2$ acting on $V^+_q$. We get from this the equality $P_{r-1}(X)=R(X^2)$ where $R(t)=\det(t-e_1^2)|_{V_q^+}$ is the characteristic polynomial of $e_1^2$ acting on $V^+_q$. 


In the sequel, we will need other simple computations involving the first two and last two elements of the basis. 

\begin{lemme}\label{elements}
Let us write 
$$e_0=1,\quad e_1=x,\quad e_2=y,\quad e_{r-3}=\epsilon_2 w,\quad e_{r-2}=\iota.$$
One has the following formulas
$$x^2=y-\epsilon_2=-w^2, \quad \iota e_n=(-1)^n\epsilon_{n+1}e_{r-2-n},\quad \iota^2=-1.$$
\end{lemme}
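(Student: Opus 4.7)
The identity $x^2 = y - \epsilon_2$ falls out directly from the recursion~\eqref{rec} at $n=1$: since $\epsilon_1=1$, one gets $XP_1 = -\epsilon_2 P_0 + P_2$, so $P_2 = X^2 + \epsilon_2$, and applying $\Phi$ (which sends $P_n$ to $e_n$) yields $y = x^2 + \epsilon_2$.

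The centerpiece is the duality formula $\iota e_n = (-1)^n \epsilon_{n+1} e_{r-2-n}$, which I would prove by induction on $n$ with two base cases. For $n=0$ this is tautological. For $n=1$, I would specialize the identity $e_1 e_m = -\epsilon_m \epsilon_{m+1} e_{m-1} + e_{m+1}$ to $m = r-2$: since $e_{r-1} = 0$ by convention, $\epsilon_{r-1} = \epsilon_1 = 1$ and $\epsilon_{r-2} = \epsilon_2$, this reads $\iota x = -\epsilon_2 e_{r-3}$, matching the formula. For the inductive step I would multiply the recursion $e_{n+1} = x e_n + \epsilon_n \epsilon_{n+1} e_{n-1}$ by $\iota$, substitute the inductive hypothesis for $\iota e_n$ and $\iota e_{n-1}$, and expand $x e_{r-2-n}$ by applying the recursion once more, using the palindrome identity $\epsilon_{r-k} = \epsilon_k$. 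Two contributions in $e_{r-1-n}$ then cancel thanks to $\epsilon_n^2 = \epsilon_{n+1}^2 = 1$ (valid for $1 \le n \le r-2$), leaving exactly $(-1)^{n+1} \epsilon_{n+2} e_{r-3-n}$, as required.

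The remaining assertions are immediate corollaries. Setting $n = r-2$ in the duality formula gives $\iota^2 = \iota e_{r-2} = (-1)^{r-2} \epsilon_{r-1} e_0 = -1$, since $r$ is odd. Setting $n=1$ gives $\iota x = -\epsilon_2 e_{r-3} = -w$ (because $e_{r-3} = \epsilon_2 w$ and $\epsilon_2^2 = 1$); squaring and using $\iota^2 = -1$ then produces $w^2 = -x^2$, which combined with the first identity gives the chain $x^2 = y - \epsilon_2 = -w^2$.

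The only mildly delicate point I anticipate is the need for two base cases: one cannot bootstrap $\iota e_1$ from $\iota e_0$ by the induction schema, because the recursion at $n=0$ is vacuous; instead one reads off $\iota e_1$ from the opposite boundary condition $e_{r-1} = 0$. Once this observation is in place, the induction is purely mechanical bookkeeping with the palindrome $\epsilon_k = \epsilon_{r-k}$.
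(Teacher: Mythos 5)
Your proof is correct, but it takes a genuinely different route from the paper. The paper's proof is a direct computation from the defining data of the algebra: one observes that the triple $(r-2,m,n)$ is $r$-admissible exactly when $m+n=r-2$, so $e_{r-2}e_n$ is automatically a multiple of $e_{r-2-n}$, and the coefficient is then read off from $\sg\langle r-2,n,r-2-n\rangle$ (which with $a=0$, $b=r-2-n$, $c=n$ collapses to $\sg(-[r-1])=-1$, independently of $n$) together with $\eta(e_{r-2-n},e_{r-2-n})=(-1)^{r-2-n}\epsilon_{r-1-n}$. You instead bootstrap everything from the tridiagonal multiplication rule $e_1e_n=-\epsilon_n\epsilon_{n+1}e_{n-1}+e_{n+1}$ and the palindromic symmetry $\epsilon_k=\epsilon_{r-k}$, via a two-base-case induction; your observation that the $n=1$ base case must be read off from the boundary condition $e_{r-1}=0$ rather than from the induction schema is exactly the right delicate point, and the telescoping cancellation of the $e_{r-1-n}$ terms works as you describe. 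What the paper's approach buys is a one-stroke, non-inductive formula at the cost of evaluating the quantum factorials in $\langle i,j,k\rangle$; what yours buys is that it never touches the trilinear form $\omega$ or admissibility at all, using only facts already recorded in Section \ref{FrobSU2}. Both are valid, and your derivations of $x^2=y-\epsilon_2$, $\iota^2=-1$ and $w^2=-x^2$ as corollaries are exactly as one would want.
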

\begin{proof}
This comes directly from the definition of the algebra structure so that we leave it to the reader. We simply observe that $(r-2,m,n)$ is $r$-admissible if and only if $m+n=r-2$ hence the product $e_{r-2}e_n$ only involves $e_{r-2-n}$ with a sign that can be computed explicitly.
\end{proof}

\begin{remark}
There is a nice analogy between the formula $V_q=V_q^+[\iota]$ and the more standard one: $\C=\R[i]$.  
\end{remark}

\subsection{The element $\Omega$}
Recall that any Frobenius algebra has a special element $\Omega$ which is the image of a punctured torus. It can be computed from any orthogonal basis $e_1,\ldots,e_n$ by the formula $$\Omega=\sum_{i=1}^n \frac{e_i^2}{\eta(e_i,e_i)}.$$

In the case of $V_q$ with its standard basis, this gives $\Omega=\sum_{n=0}^{r-2} (-1)^n\epsilon_{n+1} e_n^2$. The element corresponding to $V_q^+$ is $\Omega^+=\sum_{n=0}^{(r-3)/2}\epsilon_{2n+1}e_{2n}^2$. Separating even and odd terms in the sum and using Lemma \ref{elements} gives $\Omega=2\Omega^+$. Our purpose here is to give an alternative description: 

\begin{proposition} The element $\Omega$ satisfies the following equation:
$$\Omega=-P'_{r-1}(x)P_{r-2}(x).$$
\end{proposition}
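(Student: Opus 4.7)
The plan is to derive the identity via the classical Christoffel--Darboux formula applied to the family $(P_n)$, then evaluate at the roots of $P_{r-1}$ using L'Hôpital.

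First I would rewrite the starting expression in a more convenient form. By the formula recalled just before the statement, $\Omega = \sum_{n=0}^{r-2} h_n^{-1} P_n(x)^2$ where $h_n=(-1)^n\epsilon_{n+1}$ (using $h_n^{-1}=h_n$ since $h_n=\pm 1$). A short calculation from $h_n=(-1)^n\epsilon_{n+1}$ gives the key compatibility with the recursion \eqref{rec}: one has $\epsilon_n\epsilon_{n+1}=-h_n/h_{n-1}$, so the three-term recurrence can be rewritten as
$$h_{n-1}P_{n+1}(X)=h_{n-1}XP_n(X)-h_n P_{n-1}(X).$$

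Next I would establish a Christoffel--Darboux identity. Writing the recurrence in two variables $x,y$, multiplying the $x$-version by $P_n(y)/h_n$ and the $y$-version by $P_n(x)/h_n$ and subtracting, one gets a telescoping identity for
$$B_n := \frac{1}{h_n}\bigl(P_{n+1}(x)P_n(y)-P_{n+1}(y)P_n(x)\bigr),$$
namely $B_n-B_{n-1}=(x-y)P_n(x)P_n(y)/h_n$. With the convention $P_{-1}=0$ one has $B_{-1}=0$, so summing from $0$ to $r-2$ yields the Christoffel--Darboux formula
$$(x-y)\sum_{n=0}^{r-2}\frac{P_n(x)P_n(y)}{h_n}=\frac{1}{h_{r-2}}\bigl(P_{r-1}(x)P_{r-2}(y)-P_{r-1}(y)P_{r-2}(x)\bigr).$$

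Now I would pass to the algebra $V_q\simeq\Q[X]/(P_{r-1})$. By Lemma \ref{semi-simple} the polynomial $P_{r-1}$ has simple roots, so it suffices to prove the identity after specialization at each complex root $\xi$ of $P_{r-1}$. Setting $x=\xi$, the left member's limit as $y\to\xi$ is exactly $\Omega(\xi)$, while on the right $P_{r-1}(\xi)=0$, so a single application of L'Hôpital gives
$$\Omega(\xi)=\frac{1}{h_{r-2}}\,P_{r-2}(\xi)\,\lim_{y\to\xi}\frac{-P_{r-1}(y)}{x-y}=\frac{P_{r-2}(\xi)P'_{r-1}(\xi)}{h_{r-2}}.$$

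Finally I would evaluate $h_{r-2}$. Since $r$ is odd, $r-2$ is odd, and $\epsilon_{r-1}=\epsilon_1=1$ (using $\epsilon_{r-n}=\epsilon_n$), so $h_{r-2}=-1$. This gives $\Omega(\xi)=-P'_{r-1}(\xi)P_{r-2}(\xi)$ at every root $\xi$ of $P_{r-1}$, hence the equality $\Omega=-P'_{r-1}(x)P_{r-2}(x)$ in $V_q$. The only subtlety is the sign bookkeeping in the Christoffel--Darboux derivation and the identification of $h_{r-2}$; once those are set, the result is an essentially formal consequence of the three-term recurrence and of the orthogonality of the $e_n$ for $\eta$.
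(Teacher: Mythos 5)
Your proof is correct and follows essentially the same route as the paper: both derive the Christoffel--Darboux identity by telescoping the three-term recurrence \eqref{rec} and then extract $\Omega=-P'_{r-1}(x)P_{r-2}(x)$ from it. The only (cosmetic) difference is that the paper first passes to the confluent form $Y\to X$ as a polynomial identity and reduces modulo $P_{r-1}$, whereas you specialize at each root $\xi$ of $P_{r-1}$ and invoke Lemma \ref{semi-simple} to conclude; your sign bookkeeping, including $h_{r-2}=-1$, checks out.
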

\begin{proof}
This is a consequence of the Christoffel-Darboux formula for orthogonal polynomials, see \cite{Chihara}. We reproduce the (easy) proof for the convenience of the reader. 
Multiplying Equation \eqref{rec} by $P_n(Y)$ and exchanging the roles of $X$ and $Y$ we get:
$$\begin{cases}
XP_n(X)P_n(Y)=-\epsilon_n\epsilon_{n+1}P_{n-1}(X)P_n(Y)+P_{n+1}(X)P_n(Y)\\
YP_n(X)P_n(Y)=-\epsilon_n\epsilon_{n+1}P_{n-1}(Y)P_n(X)+P_{n+1}(Y)P_n(X)
\end{cases}$$
Multiplying by $\epsilon_{n+1}$ and taking the difference gives 
$$\epsilon_{n+1}(X-Y)P_n(X)P_n(Y)=Q_n(X,Y)+Q_{n-1}(X,Y)$$
 where $Q_n(X,Y)=\epsilon_{n+1}(P_{n+1}(X)P_n(Y)-P_{n+1}(Y)P_n(X))$. Summing the left hand side after multiplication by $(-1)^n$ hence gives a telescopic sum yielding the so-called Christoffel-Darboux formula:
 $$\sum_{n=0}^{r-2}(-1)^n\epsilon_{n+1} P_n(X)P_n(Y)=-\frac{P_{r-1}(X)P_{r-2}(Y)-P_{r-1}(Y)P_{r-2}(X)}{X-Y}$$
Letting $X$ go to $Y$ we get the so-called confluent form:
 $$\sum_{n=0}^{r-2}(-1)^n\epsilon_{n+1} P_n(X)^2=-(P'_{r-1}(X)P_{r-2}(X)-P_{r-1}(X)P_{r-2}'(X))$$

Mapping this equation to $V_q$ yields the proposition as $P_{r-1}$ goes to $0$. 
\end{proof}

As a consequence, we have $\Omega=-P'_{r-1}(x)P_{r-2}(x)=-\iota P'_{r-1}(x)$. 

In particular one can compute $\epsilon:\Q[X]/(P_{r-1})\to \Q$ with the formula 

$$\epsilon(f)=\tr_{V_q}(\Omega^{-1}f)=\tr_{V_q}(\frac{\iota f}{P'_{r-1}(x)})=\sum_{P_{r-1}(z)=0} \Res_z(\frac{P_{r-2} f}{P_{r-1}}).$$

This formula is particularly nice due to the following continued fraction expansion which follows directly from Equation \eqref{rec} by dividing by $P_n$ and applying induction: 

$$\frac{P_{r-1}}{P_{r-2}}=X+
\cfrac{\epsilon_1\epsilon_2}
{X+\cfrac{\epsilon_2\epsilon_3}{\cdots+\cfrac{\epsilon_{r-1}\epsilon_{r-2}}{X}}
}.$$

\subsection{Relation with 2-bridge knots}\label{Riley}
Recall that the fundamental group of the two-brige knot $K(r,s)$ where $r,s$ are coprime odd integers satisfying $0<s<r$ has a Wirtinger presentation given by 
$$G=\pi_1(S^3\setminus K(r,s))=\langle u,v|wu=vw\rangle, \quad w=u^{\epsilon_1}v^{\epsilon_2}\cdots u^{\epsilon_{r-2}}v^{\epsilon_{r-1}}.$$

Following Riley (see \cite{Riley} or \cite{KM}), any representation $\rho:G\to \SL_2(\C)$ such that $\rho(u)$ is parabolic can be conjugated such that 
$$\rho(u)=\begin{pmatrix} 1 & 1 \\ 0 & 1\end{pmatrix},\quad 
\rho(v)=\begin{pmatrix} 1 & 0 \\ x & 1\end{pmatrix}.$$
A direct computation shows that $\rho$ defined as above satisfies the relation defining $G$ if and only if the upper left entry of $\rho(w)$ vanishes. Taking $x$ a formal variable, we denote by $R(x)$ this coefficient and call $R$ the Riley polynomial of parameters $(r,s)$. This will not create a conflict of notation thanks to the following proposition.

\begin{proposition}
The polynomial $R$ defined above satisfies $$P_{r-1}(X)=R(X^2).$$
\end{proposition}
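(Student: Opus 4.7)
The plan is to compute the $(1,1)$ entry of $\rho(w)$ directly as a matrix product and to recognize it, after a diagonal conjugation and an involution trick, as the $(1,1)$ entry of a product of ``continued fraction'' matrices whose recursion matches that of the $P_n$. First I would substitute $x=X^2$, so that the goal becomes that the $(1,1)$ entry of $\rho(w)$ equals $P_{r-1}(X)$. Conjugating $\rho(u)$ and $\rho(v)$ by the diagonal matrix $D=\mathrm{diag}(X,1)$ preserves $(1,1)$ entries and symmetrizes the generators to
$$\tilde u=\begin{pmatrix}1&X\\0&1\end{pmatrix},\qquad \tilde v=\begin{pmatrix}1&0\\X&1\end{pmatrix},$$
reducing matters to identifying the $(1,1)$ entry of $\tilde M:=\tilde u^{\epsilon_1}\tilde v^{\epsilon_2}\cdots\tilde u^{\epsilon_{r-2}}\tilde v^{\epsilon_{r-1}}$.

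The key observation is that $\tilde v=J\tilde u J$ with $J=\bigl(\begin{smallmatrix}0&1\\1&0\end{smallmatrix}\bigr)$ and $J^2=I$. Inserting this relation into $\tilde M$ and using the fact that $r-1$ is even, the $J$'s line up into a regular pattern and the product collapses to
$$\tilde M=\prod_{k=1}^{r-1}\bigl(\tilde u^{\epsilon_k}J\bigr)=\prod_{k=1}^{r-1}A_k,\qquad A_k=\begin{pmatrix}\epsilon_k X&1\\1&0\end{pmatrix}.$$
This places the problem in the classical setting of continued-fraction convergents: writing $A_1\cdots A_n=\bigl(\begin{smallmatrix}p_n & p_{n-1}\\ q_n & q_{n-1}\end{smallmatrix}\bigr)$, a one-line induction yields the three-term recursion $p_n=\epsilon_n X\,p_{n-1}+p_{n-2}$ with $p_0=1$, $p_{-1}=0$.

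I then match this with the defining recursion $P_n=X P_{n-1}+\epsilon_{n-1}\epsilon_n P_{n-2}$ from~\eqref{rec} via the ansatz $p_n=s_n P_n$: a direct comparison forces $s_n=\epsilon_n s_{n-1}$, hence $s_n=\prod_{k=1}^n\epsilon_k$, the constant-term compatibility $s_{n-2}/s_n=\epsilon_{n-1}\epsilon_n$ being automatic because $\epsilon_k^2=1$. It then remains to show $s_{r-1}=1$, which gives $R(X^2)=p_{r-1}(X)=P_{r-1}(X)$. This is immediate from the palindromic symmetry $\epsilon_k=\epsilon_{r-k}$ noted earlier in the paper combined with the parity of $r-1$: pairing $k\leftrightarrow r-k$ yields $s_{r-1}=\prod_{k=1}^{(r-1)/2}\epsilon_k\epsilon_{r-k}=1$.

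The only delicate point of the whole argument is pure bookkeeping—carefully tracking the $J$-insertions when collapsing $\tilde M$ and matching signs in the ansatz $p_n=s_n P_n$; each of the three substantive ingredients (diagonal symmetrization, the swap involution $J$, and the continued-fraction recursion) does its work in a single line.
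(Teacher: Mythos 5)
Your argument is correct and is essentially the paper's proof run in the opposite direction: both hinge on expressing $R(X^2)$ as the $(1,1)$ entry of the product $\prod_k\bigl(\begin{smallmatrix}\epsilon_k X&1\\1&0\end{smallmatrix}\bigr)$ of continuant matrices via a diagonal conjugation, together with the palindromic symmetry $\epsilon_k=\epsilon_{r-k}$ forcing $\prod_k\epsilon_k=1$. Where the paper pairs these matrices two by two and conjugates by $\mathrm{diag}(1,x)$ to recover the Riley word, you use the involution $J$ and absorb the sign discrepancy into the multiplier $s_n=\prod_{k\le n}\epsilon_k$; this is equivalent bookkeeping.
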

\begin{proof}
Multiplying the lines of $M$ by $\epsilon_i$ and conjugating by the diagonal matrix with entries $\epsilon_1,\epsilon_1\epsilon_2,\cdots$, we find 
$$P_{r-1}(X)=\epsilon_1\cdots\epsilon_{r-1}K_{r-1}(\epsilon_1 X,\ldots,\epsilon_{r-1}X)$$ where we denote by $K_n$ the continuant given by 
$$K_n(x_1,\ldots,x_n)=\det \begin{pmatrix} x_1 & -1 &  & &  \\
1& x_2 & -1 &  & \\
&&\ddots&&\\
&&1&x_{n-1}&-1\\
&&&1&x_n
\end{pmatrix}$$
The symmetry $\epsilon_i=\epsilon_{r-i}$ even implies that $\epsilon_1\epsilon_2\cdots\epsilon_{r-1}=1$.
The formula $K_n(x_1,\ldots,x_n)=x_1K_{n-1}(x_2,\ldots,x_n)+K_{n-2}(x_3,\ldots,x_n)$ and a direct recursion gives the identity

$$\begin{pmatrix}K_n(x_1,\ldots,x_n) & K_{n-1}(x_1,\ldots,x_{n-1})\\K_{n-1}(x_2,\ldots,x_n) & K_{n-2}(x_2,\ldots,x_{n-1})\end{pmatrix}=\begin{pmatrix} x_1 &1\\ 1 & 0 \end{pmatrix}\cdots \begin{pmatrix} x_{n-1} &1\\ 1 & 0 \end{pmatrix}\begin{pmatrix} x_{n} &1\\ 1 & 0 \end{pmatrix}.$$

Replacing $x_i$ with $\epsilon_i x$, we pack the matrices two by two and compute 
$$\begin{pmatrix} \epsilon_1 x & 1 \\ 1 & 0\end{pmatrix}
\begin{pmatrix} \epsilon_2 x & 1 \\ 1 & 0\end{pmatrix}=\begin{pmatrix} 1+\epsilon_1\epsilon_2 x^2 & \epsilon_1 x \\ \epsilon_2 x & 1\end{pmatrix}$$
Conjugating with $g=\begin{pmatrix} 1 & 0 \\ 0 & x\end{pmatrix}$ we can write it as follows:

$$g\begin{pmatrix} \epsilon_1 x & 1 \\ 1 & 0\end{pmatrix}
\begin{pmatrix} \epsilon_2 x & 1 \\ 1 & 0\end{pmatrix}g^{-1}=\begin{pmatrix} 1+\epsilon_1\epsilon_2 x^2 & \epsilon_1  \\ \epsilon_2 x^2 & 1\end{pmatrix}=
\begin{pmatrix} 1 & \epsilon_1 \\ 0 & 1\end{pmatrix}
\begin{pmatrix} 1& 0 \\ \epsilon_2 x^2 & 1\end{pmatrix}.$$

Putting all together gives 
$$g\begin{pmatrix}K_{r-1}(\epsilon_1x,\ldots,\epsilon_{r-1}x) & K_{r-2}(\epsilon_1x,\ldots,\epsilon_{r-2}x)\\K_{r-2}(\epsilon_2x,\ldots,\epsilon_{r-1}x) & K_{r-3}(\epsilon_2 x,\ldots,\epsilon_{r-2}x)\end{pmatrix}g^{-1}=\begin{pmatrix} 1 &1\\ 0 & 1 \end{pmatrix}^{\epsilon_1}\begin{pmatrix} 1 &0\\ x^2 & 1 \end{pmatrix}^{\epsilon_2}\cdots$$

Taking the upper left entry on both sides finally proves the proposition:
$$K_{r-1}(\epsilon_1x,\ldots,\epsilon_{r-1}x)=R(x^2)=P_{r-1}(x).$$
\end{proof}


\section{Signed Verlinde algebras for SO$_3$}\label{SO3}

Let us concentrate on the SO$_3$ case: one has $\Omega^+=\frac{1}{2}\Omega=-\frac{1}{2}P'_{r-1}(x)P_{r-2}(x)$. As $P_{r-1}(x)=R(x^2)$, $P_{r-2}(x)=\iota$ and $\iota x=-w$, we get 
$$\Omega^+=-xR'(x^2)\iota=wR'(-w^2).$$

It follows that $w$ plays a prominent role in $V_q^+$: let us define $\chi(t)=\det(t-w)|_{V_q^+}$, the characteristic polynomial of $w$. We recall that we have set $R(t)=\det(t-e_1^2)|_{V_q^+}$ and $x=e_1$. A simple computation shows that $w$ has the following matrix in the standard basis of $V_q^+$:

$$W=\begin{pmatrix}
&&&&\epsilon_{r-1}\\
&&&\epsilon_{r-3}&-\epsilon_{r-2}\\
&&\reflectbox{$\ddots$}&\reflectbox{$\ddots$}&\\
&\epsilon_4&-\epsilon_5&&\\
\epsilon_2& -\epsilon_3&&&
\end{pmatrix}$$
From the equality $w^2=-x^2$, we get $\det(t^2-w^2)=\det(t^2+x^2)$ hence $\det(t-w)\det(-t-w)=\det(-t^2-x^2)$. It follows that 
$$\chi(t)\chi(-t)=R(-t^2)=P_{r-1}(it).$$

Hence, differentiating the above equation we get $\Omega^+=-\frac{1}{2}\chi'(w)\chi(-w)$ in particular we get on $V_q^+$:
$$\epsilon(f)=-2\sum_{\chi(t)=0}\Res_t \frac{f(t)}{\chi(t)\chi(-t)}=-2\sum_{\chi(t)=0}\Res_t\frac{f(t)}{R(-t^2)}.$$

As one has $V_q^+=\Q[t]/(\chi(t))$, it is useful to study the polynomial $\chi$. Its constant coefficient is $\pm 1$, showing that $w$ is a unit. Moreover, numerical experiments show that it has quite small coefficients that we compute in the next section.

\subsection{Two examples}

\subsubsection{The case s=1}
Set $q=\exp(\frac{i\pi}{r})$. In this case, as explained in Lemma \ref{semi-simple}, one has $P_{r-1}=U_{r-1}$ where $U_n(i(t+t^{-1}))=i^n\frac{t^{n+1}-t^{-n-1}}{t-t^{-1}}$. This proves that $V_q$ is the sub-algebra of $\Q[t]/(t^{2r}-1)$ generated by $i(t+t^{-1})$. In particular, the list of embeddings of $V_q$ in $\C$ is given by $x\mapsto 2i\cos(\frac{k\pi}{r})$ for $k=1,\ldots,r-1$. This also shows that $V_q^+$ is totally real.


A direct computation gives $P_{r-2}(x)=i^{r-2}$ and $P'_{r-1}(x)=\frac{2r i^r}{(t-t^{-1})^2}$, hence: 
$$\Omega=\frac{-2r}{(t-t^{-1})^2}.$$

From the formulas of Section \ref{generalites}, we get 
$$\langle S_g\rangle=\tr_{V_q}(\Omega^{g-1})=\big(\frac r 2 \big)^{g-1}\sum_{k=1}^{r-1}\sin(\frac{k\pi}{r})^{2-2g}.$$

We recover the standard Verlinde formula, as the TQFT for $q=\exp(i\pi/r)$ is unitary and its signature coincides with its dimension. We also observe that $V_q$ is simple if and only if $r$ is prime. This is due to the decomposition $t^{2r}-1=\prod_{d|2r}\Phi_d$ where $\Phi_d$ are the cyclotomic polynomials. 

\subsection{The case $s=r-2$}

This case is opposite to the previous one and behave somewhat more simply than the general case so that we include an analysis of it, yielding a proof of Proposition \ref{propsimple}. Set $q=\exp(\frac{i\pi(r-2)}{r})=-\exp(\frac{2i\pi}{r})$. 

\begin{proposition}
Setting $\chi(t)=\det(t-W)$, the characteristic polynomial of $W$ one has:
\begin{enumerate}
\item $\tr(W)=1$.
\item The eigenvalues of $W$ have positive real parts. 
\item The polynomial $\chi$ is irreducible over $\Q$.
\item $\chi$ have no real roots if $r=1[4]$ and one real root if $r=-1[4]$.
\end{enumerate}
\end{proposition}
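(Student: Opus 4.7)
For part~(1), I would work directly with the explicit matrix description of $W$. Since $s=r-2$, one has $\epsilon_n=(-1)^{n-1}$ for $n<r/2$ and $\epsilon_n=(-1)^n$ for $n>r/2$. Because $W$ is supported only on the anti-diagonal and super-anti-diagonal, at most one diagonal entry is nonzero; this entry lies at the intersection of the main diagonal with the anti-diagonal when $r\equiv 3\pmod 4$ and with the super-anti-diagonal when $r\equiv 1\pmod 4$, and a short computation shows that in both parities it equals $+1$. Hence $\tr(W)=1$.

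Parts~(2), (3) and~(4) hinge on a uniform root parametrization of $\chi_r$. Setting $X=u+u^{-1}$ and unwinding the recursion~\eqref{rec}, which for $s=r-2$ takes the ``Chebyshev'' form $P_{n+1}=XP_n+P_{n-1}$ except at $n=(r-1)/2$ where the sign flips, one derives the closed formula $P_{r-1}(2\cos\theta)=(1-\cos(r\theta)\cos\theta)/\sin^2\theta$ and consequently
$$\chi_r(t)\chi_r(-t)=P_{r-1}(it)=(-1)^{(r-1)/2}\bigl(U_n(t)^2-U_{n-1}(t)^2\bigr),$$
where $n=(r-1)/2$ and $U_k$ are the Fibonacci-type polynomials defined by $U_{k+1}=tU_k+U_{k-1}$, $U_0=1$, $U_1=t$. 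The two factors $U_n\pm U_{n-1}$ are monic of degree $n$, and the trace condition $\tr(W)=1$ from~(1) selects $\chi_r(t)=U_n(t)-U_{n-1}(t)$. Writing $u=\rho$ in the factorization $U_k=(\rho^{k+1}-\sigma^{k+1})/(\rho-\sigma)$ with $\rho\sigma=-1$, so that $t=u-1/u$, the equation $\chi_r(t)=0$ becomes $u^r=(-1)^{(r+1)/2}(u+1)/(u-1)$, with the spurious roots $u=\pm i$ (i.e.\ $t=\pm 2i$) separated off by the factor $u^2+1$. The Cayley substitution $u=(1+z)/(1-z)$ puts the equation in the symmetric form $zg(z)^r=\pm 1$ with $g(z)=(1+z)/(1-z)$, and one computes $t=4z/(1-z^2)$, hence $\re(t)>0 \Leftrightarrow \re(z)(1-|z|^2)>0$.

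For~(2) one uses that $|g(z)|>1 \Leftrightarrow \re(z)>0$: taking moduli in $zg(z)^r=\pm 1$ gives $|z|\,|g(z)|^r=1$, forcing every non-spurious solution $z$ to lie either in the right half of the open unit disk or in the left half of its complement; both regions satisfy $\re(z)(1-|z|^2)>0$, whence $\re(\mu)>0$ for every eigenvalue $\mu$ of $W$. For~(4) a real eigenvalue forces $u$ real, and an interval-by-interval sign analysis on $(-\infty,-1),(-1,0),(0,1),(1,\infty)$ shows that $u^{r+1}-u^r+u+1$ is strictly positive on $\R$ when $r\equiv 1\pmod 4$ (no real eigenvalue), while $u^{r+1}-u^r-u-1$ changes sign exactly twice when $r\equiv 3\pmod 4$, producing a single $\{u_0,-1/u_0\}$-orbit and hence exactly one real eigenvalue.

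The main obstacle is~(3), the irreducibility of $\chi_r$ over $\Q$. My preferred strategy is reduction modulo~$2$: in the small cases computed above, $\chi_r\bmod 2$ is either itself irreducible (e.g.\ $r=5,7,11$) or factors with a degree-partition incompatible with any proper factorization over $\Z$ (e.g.\ $\chi_9\equiv(t+1)(t^3+t+1)\bmod 2$ rules out any $2+2$ split, and the absence of rational roots rules out the $1+3$ split). One would need to identify the mod-$2$ factorization pattern in general, presumably by iterating the mod-$2$ recursion $\chi_{r+4}\equiv t\chi_{r+2}+\chi_r\pmod 2$ and relating the resulting periodic pattern to the multiplicative order of $2$ modulo~$r$. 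Alternatively, for $r\ge 5$ the knot $K(r,r-2)$ is hyperbolic, and its discrete faithful parabolic representation embeds a trace field of degree $(r-1)/2=\dim V_q^+$ into $\Q[P(r,r-2)]\simeq V_q^+$, forcing $\chi_r$ to be irreducible once this degree is taken as input.
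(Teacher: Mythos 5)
Your part (1) matches the paper's argument, and your treatment of (2) and (4) via the closed form $\chi=U_n-U_{n-1}$ (with $n=(r-1)/2$) and the substitutions $t=u-1/u$, $u=(1+z)/(1-z)$ is a genuinely different and essentially workable route: the paper instead observes that $W+W^T=2E$ with $E$ having a single nonzero entry $1$ on the diagonal, deduces $2\re(\lambda)=2|X_{n+1}|^2>0$ for a unit eigenvector $X$, and obtains (4) from Descartes' rule of signs after showing, via a combinatorial lemma on the determinant expansion, that the coefficients of $\chi$ alternate in sign. One caveat on your route: the step ``the trace condition selects $\chi=U_n-U_{n-1}$'' is not justified as written, since two monic degree-$n$ factors of $R(-t^2)$ obtained by choosing one root from each pair $\{\mu,-\mu\}$ can perfectly well share the same subleading coefficient. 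It can be repaired, but only \emph{after} you have proved that every root of $U_n-U_{n-1}$ has positive real part: if $\chi$ and $U_n-U_{n-1}$ differed on a (necessarily conjugation-stable) set $S$ of pairs, then $1=\tr(W)=1-2\sum_{\mu\in S}\re(\mu)$ with $\sum_{\mu\in S}\re(\mu)>0$ unless $S=\emptyset$. So the logical order must be: parametrize the roots of $U_n-U_{n-1}$, prove positivity of their real parts, and only then identify $\chi$.

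The genuine gap is part (3), which you yourself flag as the main obstacle and do not prove. The mod-$2$ strategy is open-ended, and the alternative via hyperbolicity is circular: that the trace field of $K(r,r-2)$ has degree $(r-1)/2$ is essentially what irreducibility asserts, since a priori the trace field is only one factor of the Riley algebra. The paper's proof of (3) is a two-line consequence of (1) and (2), both of which you already have in hand: if $\chi=PQ$ nontrivially, then $P$ and $Q$ are monic with integer coefficients (Gauss), the subleading coefficient of each is an integer equal to $\sum\re(\lambda)$ over its conjugation-stable root set, hence a strictly positive integer by (2), and these two positive integers sum to $\tr(W)=1$, a contradiction. Inserting exactly this argument, together with the reordering above, closes your proof.
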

\begin{proof}
For this specific root $q$, the signs $\epsilon_1,\ldots,\epsilon_{r-1}$ are alternating except for the two middle ones: this gives a very simple matrix $W$ for $w$ acting on $V_q^+$: if $r=4n+1$, the first $n$ colums of $W$ have $-1$s, the last $n$ columns have $1$s. If $r=4n+3$, the first $n+1$ rows have $1$s, the last $n$ rows have $-1$s.

It follows that $\tr(W)=1$. Moreover, $W$ is almost antisymmetric: $M+M^T=2E$ where $E$ has only one non zero entry at $(n+1,n+1)$ where it is equal to $1$. 

Let $X$ be a normalized eigenvector for $W$, i.e. $X^*X=1$ where we write $X^*=\overline{X}^T$. One has $X^*WX=\lambda$ and $X^*W^*X=\overline{\lambda}$. Summing the two gives 

$$\lambda+\overline{\lambda}=2X^*EX=2|X_{n+1}|^2.$$ 
This gives the positivity of the real part of $\lambda$. Moreover, if $X_{n+1}=0$, one can solve the linear system $WX=\lambda X$ step by step and prove that $X=0$. Hence $\re(\lambda)>0$ as claimed. 

Suppose that there is a non trivial decomposition $\chi=PQ$. As $\chi$ is monic, $P$ and $Q$ are also monic with integral coefficients. One has $1=\sum_{P(\lambda)=0}\re(\lambda)+\sum_{Q(\lambda)=0}\re(\lambda)$. The two terms are strictly positive integers summing to 1, a contradiction. 

Let us prove the last point: using Descartes' rule of signs and the previous point, it is sufficient to show that the characteristic polynomial $\chi(t)$ has coefficients with alternating signs. The computation of those coefficient is deferred to Lemma \ref{explicit} stated below. It remains to determine the sign of the contribution of the pair $(A,B)$ to the characteristic polynomial. 
As $W_{ij}+W_{ji}=0$ unless $i=j$, we find that if $|A|$ is even, its sign is $|A|/2$ whereas if $|A|$  is odd, its sign is $(|A|-1)/2$, the same being true for $B$. Setting $a=|A|$ and $b=|B|$, one has $a+b=k$ and the sign of the contribution is 
$$(-1)^{k(k+1)/2+a(a-1)/2+b(b-1)/2}.$$
This can be rewritten $(-1)^{a+b+ab}$. However, because of the conditions on $A$ and $B$, $a$ and $b$ cannot be both odd integers so that $(-1)^{ab}=1$ and we get the sign $(-1)^k$ as expected. 

\end{proof}

\begin{lemme}\label{explicit}
Let $M$ be a matrix of size $n$ with $M_{i,j}=0$ if $i+j\notin\{n+1,n+2\}$. Its characteristic polynomial has the following expression:
$$\det(t-M)=\sum_{k=0}^n t^{n-k}(-1)^{\frac{k(k+1)}{2}}\sum_{(A,B)}\prod_{(i,j)\in A\cup B} M_{ij}$$
In this formula, $A\subset \{(i,j), i+j=n+1\}$ and $B\subset \{(i,j), i+j=n+2\}$ satisfy the following properties. 
\begin{enumerate}
    \item They are both invariant by the involution $(i,j)\mapsto (j,i)$.
    \item Their images by the projections $(i,j)\mapsto i$ and $(i,j)\mapsto j$ are disjoint.
    \item The sum of their cardinality is equal to $k$.
\end{enumerate}
\end{lemme}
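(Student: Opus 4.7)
The plan is to expand $\det(tI-M)$ via the principal minor identity
\[
\det(tI-M)=\sum_{k=0}^n t^{n-k}(-1)^k\sum_{|S|=k}\det(M|_S),
\]
and then to expand each $\det(M|_S)$ as a signed sum over permutations $\pi\in\mathfrak S_S$ of the monomial $\prod_{i\in S}M_{i,\pi(i)}$. The whole task is to identify which permutations contribute and to put them in bijection with the pairs $(A,B)$ of the statement.

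The key combinatorial observation is that the support condition on $M$ forces any contributing cycle of $\pi$ to have length $1$ or $2$. Indeed, suppose $(i_1,\ldots,i_\ell)$ is a cycle with $M_{i_k,i_{k+1}}\neq 0$ for each (cyclically indexed) $k$, and write $i_k+i_{k+1}=n+1+\epsilon_k$ with $\epsilon_k\in\{0,1\}$. A direct computation gives $i_{k+2}-i_k=\epsilon_{k+1}-\epsilon_k$. For $\ell\geq 3$ the $i_k$ are pairwise distinct, which forces $\epsilon_{k+1}\neq\epsilon_k$ cyclically; this requires $\ell$ even, and then $i_{2m+1}=i_1+m(\epsilon_2-\epsilon_1)$ is strictly monotone in $m$, contradicting $i_{2p+1}=i_1$ whenever $\ell=2p\ge 4$.

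Consequently every contributing $\pi$ is a disjoint product of transpositions $(i,j)$ with $i+j\in\{n+1,n+2\}$, possibly together with a single fixed point $i$ satisfying $2i\in\{n+1,n+2\}$. One then sets up the bijection: let $A$ collect the pairs $\{(i,j),(j,i)\}$ coming from each transposition with $i+j=n+1$ (plus $(i,i)$ from a fixed point with $2i=n+1$), and define $B$ analogously with $n+2$ in place of $n+1$. The three conditions of the lemma become, respectively, swap-invariance of each cycle's contribution, index-disjointness of distinct cycles of $\pi$, and $|A|+|B|=|S|=k$; moreover the $\pi$-monomial equals $\prod_{(i,j)\in A\cup B}M_{i,j}$.

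It remains to verify the sign. With $t$ transpositions and $f\in\{0,1\}$ fixed points we have $\operatorname{sgn}(\pi)=(-1)^t$ and $k=2t+f$, so the total sign is $(-1)^{k+t}$; a short case analysis, splitting $k$ into residues mod $4$, yields $k+t\equiv k(k+1)/2\pmod 2$ in all four classes, producing the asserted coefficient $(-1)^{k(k+1)/2}$. The only step requiring genuine thought is the cycle-length argument via the alternating $\epsilon$-sequence; once that is in place, the rest is routine bookkeeping.
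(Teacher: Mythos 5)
Your proof is correct, and it reaches the statement by a slightly different mechanism than the paper, so a comparison is worthwhile. Both arguments expand the characteristic polynomial over permutations and reduce to showing that every contributing permutation is an involution pairing $i$ with $n+1-i$ or $n+2-i$. The paper gets there in one stroke: writing $\sigma(j)=n+1+\delta_j-j$ on the support $J$, it observes that $j_1<j_2$ forces $\sigma(j_2)<\sigma(j_1)$, so $\sigma|_J$ is the unique order-reversing bijection of $J$; this simultaneously gives the involution structure \emph{and} the signature $(-1)^{\binom{k}{2}}$ as the number of inversions, whence the total sign $(-1)^{k+\binom{k}{2}}=(-1)^{k(k+1)/2}$. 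You instead first pass through the principal-minor expansion, then rule out cycles of length $\ge 3$ by the alternating-$\epsilon$ argument (which is sound: an odd cycle cannot carry a properly alternating $\epsilon$-sequence, and an even cycle of length $\ge 4$ forces strict monotonicity of $i_{2m+1}$), and finally recover the sign from $\operatorname{sgn}(\pi)=(-1)^t$ and $k=2t+f$. Your mod-$4$ case analysis for the sign can be compressed: $\binom{2t+f}{2}\equiv t\pmod 2$ for $f\in\{0,1\}$, so $k+t\equiv k+\binom{k}{2}=k(k+1)/2\pmod 2$ directly. The paper's order-reversal observation is the more economical route, but your cycle analysis is equally valid and arguably more transparent about why only transpositions and at most one fixed point can occur; the remaining bookkeeping (the bijection between contributing $(S,\pi)$ and pairs $(A,B)$, and the identification of the monomials) is handled correctly in both.
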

\begin{proof}
The proof follows from a direct analysis of the expansion $\det(t-M)=\sum_{\sigma\in S_n}\epsilon(\sigma)\prod_{i=1}^n(t\delta_{i\sigma(i)}-M_{i\sigma(i)})=\sum_{\sigma\in S_n}\epsilon(\sigma)\sum_{I, J}t^{|I|}(-1)^{|J|}\prod_{j\in J}M_{j\sigma(j)}$. In the formula, $I,J$ run over all partitions of $\{1,\ldots,n\}$ such that $\sigma(i)=i$ for all $i\in I$. 
By construction, $\sigma$ permutes $J$. Writing $\sigma(j)+j=n+1+\delta_j$ for $\delta_j\in\{0,1\}$, one finds that for any $j_1<j_2\in J$: 
$$\sigma(j_2)-\sigma(j_1)=j_1-j_2+\delta_{j_2}-\delta_{j_1}< 0.$$
This proves that $\sigma$ invert all pairs in $J$ hence $\sigma$ acts by puting all elements in $J$ in reverse order. In particular $\sigma$ is an involution and $\epsilon(\sigma|_J)=(-1)^{k(k-1)/2}$ where $k=|J|$. The final sign of the contribution of the pair $(I,J)$ is $(-1)^{k+k(k-1)/2}$ as expected. The last point to prove is the fact that $A$ and $B$ are stable by the involution $(i,j)\mapsto (j,i)$ but this comes from the fact that $\sigma$ is an involution.
\end{proof}
\subsection{On signatures}\label{signature}

In this section we compare various notions of signatures that naturally appear in this context.
Let $V$ be a finite dimensional commutative $\Q$-algebra: for $t\in V$ we define $\sg_V(t)$ to be the signature of the quadratic form 
$$(x,y)\mapsto \tr_{V}(xyt).$$ 

If $V$ is a number field, let $\phi_1,\ldots,\phi_{r_1}:V\to \R$ be the family of real embeddings of $V$. We have the formula 
$$ \sg_V(t)=\sum_{i=1}^{r_1} \sg \phi_i(t).$$
This gives in particular
$$\sg_V(1)=r_1(V)\text{ and }
|\sg_V(t)|\le r_1\text{ for all }t\in V.$$
As one can write $\eta(x,y)=\tr_V(\Omega^{-1}xy)$, the usual signature of the bilinear form $\eta$ is given by $$\sg(\eta)=\sg_V(\Omega^{-1})=\sg_V(\Omega).$$

Recall that the Frobenius algebra $V_q$ and its even part $V_q^+$ are naturally endowed with a non-degenerate bilinear form $\eta$ (resp. $\eta^+$) which have associated signatures. From the fact that the standard basis is orthogonal we immediately compute 
$$\Sign(\eta)=\sum_{n=1}^{r-1}\epsilon_n=2\Sign(\eta^+)$$

On the other hand, the knot $K(r,s)$ also has a signature: it is by definition the signature of the matrix $M+M^T$ where $M$ is a Seifert matrix of $K(s,r)$. One finds in \cite[Theorem 9.3.6]{Murasugi} the formula 
$$\Sign(K(r,s))=\sum_{n=1}^{r-1}\epsilon_n$$
so that these signatures coincide.


\begin{thebibliography}{10}

\bibitem[BHMV]{BHMV}
C. Blanchet, N. Habegger, G. Masbaum and P.Vogel,
\newblock Topological Quantum Field Theories derived from the Kauffman bracket.
\newblock \emph{Topology}, \textbf{34}, no. 4, 883-927, 1995.

\bibitem[C]{Chihara}
T. S. Chihara,
\newblock An introduction to orthogonal polynomials.
\newblock Gordon and Breach, Science Publishers, 1978.

\bibitem[DM]{DM}
B. Deroin and J. Marché,
\newblock Toledo invariants of Topological Quantum Field Theories.
\newblock \emph{arXiv:2207.09952}

\bibitem[JK]{JoKim}
K. Jo and H. Kim,
\newblock Continuant, Chebyshev polynomials and Riley polynomials.
\newblock \emph{Journal of knot theory and its ramifications}, Vol. 31, No.1, 2022.

\bibitem[KM]{KM}
T. Kitano, T. Morifuji.
\newblock A note on Riley polynomials of 2-bridge knots.
\newblock \emph{Ann. Fac. Sci. Toulouse}, Math. (6) 26, No. 5, 1211-1217 (2017).

\bibitem[M]{Murasugi}
K. Murasugi,
\newblock Knot theory and its applications,
\newblock \emph{Birkhaüser}.

\bibitem[R]{Riley}
R. Riley,
\newblock Parabolic representations of knot groups. I.
\newblock \emph{Proc. Lond. Math. Soc.}, III. Ser. 24, 217-242 (1972).

\end{thebibliography}
\end{document}